\newtheorem{lem}{Lemma}[section]
\newtheorem{thm}[lem]{Theorem}
\theoremstyle{definition}
\newtheorem{defn}[lem]{Definition}
\newtheorem{rem}[lem]{Remark}
\newcommand{\CC}{{\mathbb C}}
\newcommand{\RR}{{\mathbb R}}
\newcommand{\ZZ}{{\mathbb Z}}
\newcommand{\dsps}{\displaystyle}
\newcommand{\Leg}[2]{\left( \frac{#1}{#2} \right)}
\title[Optimal ambiguity functions]
{Optimal ambiguity functions and Weil's exponential sum bound}
\date{}
\subjclass[2010]{Primary: 42A99.  Secondary: 11T23, 11T24, 94A12}
\author{John J. Benedetto}
\address{Norbert Wiener Center\\
         Department of Mathematics \\
         University of Maryland \\
         College Park, MD 20742 \\
         USA}
\email{jjb@math.umd.edu}
\urladdr{http://www.math.umd.edu/\textasciitilde jjb}
\author{Robert L. Benedetto}
\address{Department of Mathematics \\
         Amherst College \\
         Amherst, MA 01002 \\
         USA}
 \email{rlb@math.amherst.edu}
 \urladdr{http://www.cs.amherst.edu/\textasciitilde rlb}
\author{Joseph T. Woodworth}
\address{Norbert Wiener Center\\
         Department of Mathematics \\
         University of Maryland \\
         College Park, MD 20742 \\
         USA}
\email{joseph.woodworth@gmail.com}
\begin{document}

\newcounter{bean}

\begin{abstract}
Complex-valued periodic sequences,
$u$, constructed by G\"{o}ran Bj\"{o}rck,
are analyzed with regard to the behavior of their discrete periodic
narrow-band ambiguity functions $A_p(u)$.
The Bj\"{o}rck sequences, which are defined on $\ZZ/p\ZZ$ for $p>2$ prime,
are unimodular and have zero autocorrelation on
$(\ZZ/p\ZZ)\smallsetminus\{0\}$. These two properties give rise to the
acronym, CAZAC, to refer to constant amplitude zero autocorrelation
sequences.
The bound proven is
$|A_p(u)| \leq 2/\sqrt{p} + 4/p$ outside of $(0,0)$,
and this is of optimal magnitude given the constraint that $u$
is a CAZAC sequence. The proof requires the full power of Weil's
exponential sum bound, which, in turn, is a consequence of his proof of
the Riemann hypothesis for finite fields.
%Refinements of the basic bound are proven for $p \equiv 3 \pmod{4}$.
Such bounds are not only of
mathematical interest, but they have direct applications as sequences in
communications and radar, as well as when the sequences are used as
coefficients of phase-coded waveforms.
\end{abstract}

\maketitle

%%%%%%%%%%%%%%%%%%%%%%%%%%%%%%%%%%%%%%%%%%%%%%%%%%%%%%%%%
%%%%%%%%%%%%%%%%%%%%%%%%%%%%%%%%%%%%%%%%%%%%%%%%%%%%%%%%%%

\section{Introduction}
\label{sec:intro}

%%%%%%%%%%%%%%%%%%%%%%%%%%%%%%%%%%%%%%%%%%%%%%%%%%%%%%%%%%%

\subsection{Purpose}
\label{sec:purp}
Let $\ZZ$ denote the ring of integers and let $\CC$ denote the 
field of complex numbers.
Given an integer $N$, form the ring $\ZZ/N\ZZ$ of integers
modulo~$N$.
\begin{defn} 
\label{def:corr}
Let $u:\ZZ/N\ZZ :\rightarrow\CC$ be an $N$-periodic sequence. The 
\emph{discrete narrow band ambiguity function},
$A_N(u):\ZZ/N\ZZ\times \ZZ/N\ZZ \to \CC$, is defined to be
$$
A_N(u)[m,n]=
\frac{1}{N}\sum_{k=0}^{N-1}u[m+k]\overline{u[k]}e^{-2\pi ikn/N}
$$
for all $(m,n)\in\ZZ/N\ZZ\times\ZZ/N\ZZ$.

The \emph{discrete autocorrelation} of $u$ is the function 
$A_N(u)[\cdot,0]:\ZZ/N\ZZ\rightarrow\CC.$
\end{defn}

The ambiguity function in Definition~\ref{def:corr}
stems from P.~M.~Woodward's definition of the
\emph{narrow band ambiguity function} defined on $\RR\times\RR$ 
\cite{Wood1953a}.

\begin{defn}
\label{def:cazac}
An $N$-periodic sequence $u:\ZZ/N\ZZ\rightarrow\CC$ is
\emph{constant amplitude zero autocorrelation} (\emph{CAZAC}) if
it satisfies the following properties:
\begin{align*}
%\label{eq:ca}
\text{(CA)} \qquad\qquad &
|u[k]|=1 \quad \text{for all } k\in\ZZ/N\ZZ, \quad \text{and}
\\
%\label{eq:zac}
\text{(ZAC)} \qquad\qquad &
C(u)[m]=\frac{1}{N}\sum_{k=0}^{N-1}u[m+k]\overline{u[k]}=0
\quad \text{for all } m\in\ZZ/N\ZZ\smallsetminus\{0\}.
\end{align*}
\end{defn}
Clearly, $C(u)[m]=A_N(u)[m,0]$ for each $m\in \ZZ/N\ZZ$. 
Equation~(CA) is the condition that $u$ has constant amplitude $1.$ 
Equation~(ZAC) is the
condition that $u$ has zero autocorrelation.

Our setting is almost exclusively limited to the case that $N=p$ is prime.
As such,  $\ZZ/p\ZZ$ is a field.

We shall use a remarkable construction of \emph{CAZAC} sequences $u_p$
of prime length $p$ to
prove optimal behavior of $A_p(u_p)$. The construction is due to 
G\"{o}ran Bj\"{o}rck \cite{bjor1985}(1985), \cite{bjor1990}(1990).
By \emph{optimal behavior}, we mean that 
if $p$ is an odd prime, then
\begin{equation}
\label{eq:basicineq}
  |A_p(u_p)[m,n]| < \frac{2}{\sqrt{p}} + \frac{4}{p} \quad \text{for all }
(m,n) \in (\ZZ/p\ZZ \times \ZZ/p\ZZ) \smallsetminus \{(0,0)\},
\end{equation}
see Theorem~\ref{thm:Mbound}.
By comparison, 
a short and elementary calculation shows that
for any \emph{CAZAC} $u$, 
$$\max\big\{|A_p(u)[m,n]| : (m,n) \in (\ZZ/p\ZZ \times \ZZ/p\ZZ)
\smallsetminus \{(0,0)\} \big\}
\geq \frac{1}{\sqrt{p-1}},$$
and therefore the bound~\eqref{eq:basicineq} above
is indeed of optimal order of magnitude.

\begin{rem}
\label{rem:weilproof}
The proof of Theorem~\ref{thm:Mbound} requires Andr\'{e} Weil's 
exponential sum bound, \cite{weil},
which is a consequence of his proof of the Riemann Hypothesis for curves over
finite fields, \cite{weil1948b}, announced in the Comptes Rendus in 1940. 
Further, there are no more elementary means to 
prove the inequality (\ref{eq:basicineq}). In fact, in estimating $A_p(u_p)$,
the critical term to estimate is a Kloosterman sum; and, if there were an 
easier way to bound it by $C/\sqrt{p}$, then there would be an easier way to
prove Weil's bound for Kloosterman sums, which is an essential consequence of
\cite{weil} and which has withstood the test of time vis a vis evolutionary
simplification.
\end{rem}

\begin{rem}
\label{rem:bjorckcode}
Notwithstanding the level of mathematics required to prove the inequality
(\ref{eq:basicineq}), as noted in Remark~\ref{rem:weilproof}, we emphasize 
that our coding and implementation
of Bj\"{o}rck's \emph{CAZAC} sequences is truly elementary.
In this regard, see \cite{BenWoo2011},
as well as earlier Bj\"{o}rck experiments and constructions
by one of the authors, e.g., see \cite{BenKonRan2009} and references
therein, cf.\ Remark~\ref{rem:othercazacs}. In the parlance of
waveform design, Theorem~\ref{thm:Mbound} is an ideal discrete
``thumbtack'' narrow band ambiguity function which can be used to
design ideal phase-coded waveforms devoid of 
any substantial time or doppler coupling
in the continuous narrow band ambiguity function plane.
With regard to hardware implementation of these phase-coded waveforms (as
well as others stemming from low correlation sequences), the power,
bandwidth, and hardware requirements will introduce noise. It is
understood that modifications must be made to the formulation of a given
low correlation sequence to permit implementation while controlling this
noise.
\end{rem}
%%%%%%%%%%%%%%%%%%%%%%%%%%%%%%%%%%%%%%%%%%%%%%%%%%%%%%%%%%%%%%
%%%%%%%%%%%%%%%%%%%%%%%%%%%%%%%%%%%%%%%%%%%%%%%%%%%%%%%%%

\subsection{Background}
\label{sec:back}

%%Notes to JB and RLB

%% a. Reference selected applications and references including BW2011.
%% b. Perhaps point out application of CAZAC sequences for sequences
%% problems as well as for waveform design. For the former, is multiple 
%% ... coding such an application?
%% c. Comment of results of Alltop and Gurevich et al., putting our best 
%% foot forward. In the case of Gurevich, and vis a vis our result, as 
%% opposed to their class of signals and cross-correlations, note their 
%% signals are bounded by 1/\sqrt{p} and, more importantly, our 
%% constructions are genuinely easy even though the proof is ``deep'' 
%% because of what is used.

%%NOTES to JB 

%%Remarks on other AFs, both discrete and continuous. Also the classical
%%methods for waveforms and the role of aperiodic. 
%%\medskip
%%Use $W_{N}$ in the definition of the DFT. The DFT has coefficient
%%$\frac{1}{\sqrt{N}}$. Let $N$ be odd and define the Gaussian sequence
%%$u$ by 
%%$$
%% u[k]=W_{N}^{rk^{2}},k=0,1,...,N-1
%%$$
%%with $(r,N)=1$.
%%\medskip
%%Reference HAA for Gauss sums proofs, especially if I can use Shanks.
%%\medskip
%%State the relation between CAZACs and DFTs and bi-unimodular sequences:
%%$u\in\ZZ/N\ZZ$ is CAZAC if and only if $u$ and ${\hat{u}}$
%%are unimodular, i.e., $u$ is bi-unimodular. 
%%\medskip
%%Comment on circulant Hadamard matrices. 
%%\medskip
The study of \emph{CAZAC} sequences and of other sequences related to optimal
autocorrelation behavior has origins in several important applications,
one of the most prominent being in the general area of waveform design
associated with radar and communications, see, e.g.,
according to year of publication
\cite{klau1960, KlaPriDarAlb1960, FraZad1962,tury1968,
vakm1969,chu1972, skol1980,popo1992, mow1996, HelKum1998, AusBar1998,
BelMon2002,LevMoz2004, GolGon2005, BenKonRan2009, HolRicSch2010}.
There are hundreds of articles in this area and so this selection may
seem arbitrary, although several of these references contain focused
lists of contributions and specific applications.
Also see Remark~\ref{rem:othercazacs}.

There are also purely mathematical origins for the construction of
\emph{CAZAC} sequences. One such origin is due to Norbert Wiener,
e.g., see new related constructions in
\cite{BenDon2007, BenDat2010}. Another may
be said to have originated in a question by Per Enflo in 1983. This
particular mathematical path has been documented and built upon by
Bahman Saffari \cite{saff2000}. Enflo's question 
is the following for a given odd prime
$p$. 
\textit{Is it true that the Gaussian sequences},
$u:\ZZ/p\ZZ\rightarrow\CC,$
\textit{defined by}
$$
u[k]=\zeta_{p}^{rk^{2}+sk},\qquad k=0,1,...,p-1,
$$
\textit{where $\zeta_p = e^{2\pi i/p}$, $r,s\in\ZZ$ and $p$ does not divide $r$,
are the only unimodular sequences of length p,
with $u[0]=1$, whose Discrete Fourier Transform (DFT) has modulus $1$}?
This is equivalent to asking whether such
sequences $u$ with $u[0]=1$ are the only bi-unimodular sequences
of odd prime length. Enflo was interested in this because of a problem
dealing with exponential sums.

Enflo's question has a positive answer for $p=3$ and $p=5$.
In 1984, by computer search, Bj\"{o}rck discovered counterexamples
to the Enflo question for $p=7$ and $p=11$, see \cite{bjor1985}. 
Later in 1985,
Bj\"{o}rck saw the role of Legendre symbols in his counterexamples,
and this led to his theorems in \cite{bjor1990}.
%%, proven here in Section~
%%in a conceptually different way than Bj\"{o}rck's original approach.
%%\medskip
It also led to a host of mathematical problems, many still unresolved,
about the number of \emph{CAZAC} sequences for a given length,
see, \cite{haag1996, BenDon2007, BenKonRan2009, BenWoo2011}, as well as a 
several valuable oral and email
communications by Saffari \cite{saff2004}. 

\begin{rem}
\label{rem:othercazacs}
It is relevant to mention a striking recent application of
low correlation sequences to radar in terms of 
compressed sensing \cite{HerStr2009}.
In this case, the authors use Alltop
sequences \cite{allt1980} Theorem~2,
cf. \cite{HeaStr2003} Section~2.1.3.
It then becomes natural to
think in terms of frames generated by Bj\"{o}rck sequences for
extending the high-resolution radar/compressed sensing setting of the
authors of \cite{HerStr2009}.

Another approach 
to the problem addressed in
Section~\ref{sec:purp} is found in \cite{GurHadSoc2008},
cf.~\cite{MauSar}.
The authors obtain bounds comparable to those found herein, but their
class of signals, called the oscillator system, is not necessarily
\emph{ZAC} although excellent cross-correlation criteria are obtained,
something we have not pursued.
More important, from the point of view of
application, the characterization and construction of the 
oscillator system
are decidedly representation theoretic. As such an explicit algorithm
associated with the collection of split tori in \emph{Sp} requires a
Bruhat decomposition.

The companion, \cite{BenWoo2011}, of this paper not only exhibits the
simplicity of implementation stressed in Remark~\ref{rem:bjorckcode},
but also reflects the combinatorial and geometrical complexity in the
ambiguity function domain due to the role of the Legendre symbol in
defining Bj\"{o}rck sequences. Some of this complexity is
characterized by intricate Latin and magic square patterns. Further, the
simplicity of implementation gives rise to useful,
efficient bounds off of small neighborhoods
of $(0,0)$ in the ambiguity function domain for compactly supported
waveforms on $\RR$ having $p$ lags whose coefficients are the elements of
a Bj{\"o}rck sequence $u_p$.
Also, it is not difficult to see that, as with the
oscillator system, there is Fourier invariance of Bj{\"o}rck sequences,
most simply calculated in the $p \equiv 1 \pmod{4}$ case, e.g., 
\cite{popo2010}.

%%Analyze and compare with Alltop and Gurevich. Reference
%%Herman and Strohmer \cite{HerStr2010} and Foucart.
%%BenWoo2011 results: new proofs, combinatorial and geometrical
%complexity of the AF domain, reflecting the role of the Legendre symbol
%%in defining CAZACs, analogue waveform estimates
%%Popovic wrt DFT a la Gurevich 
\end{rem}

%%%%%%%%%%%%%%%%%%%%%%%%%%%%%%%%%%%%%%%%%%%%%%%%%%%%%%%%%%%%

\subsection{Outline}
\label{sec:outline}
We define Bj\"{o}rck sequences in Section~\ref{sec:bjorck}.
Properties of Kloosterman sums are proven in Section~\ref{ssec:kloo}.
These, in turn, are used along with Weil's results and the proper
decomposition formula to express Bj\"{o}rck sequences 
in the way that allows us to prove
Theorem~\ref{thm:Mbound} in Section~\ref{ssec:Mbound}.
Section~\ref{sec:figures} provides figures
and data which motivated and guided us.

%%%%%%%%%%%%%%%%%%%%%%%%%%%%%%%%%%%%%%%%%%%%%%%%%%%%%%%%%%%%%%%%
%%%%%%%%%%%%%%%%%%%%%%%%%%%%%%%%%%%%%%%%%%%%%%%%%%%%%%%%%%%%%%%

\section{Bj{\"o}rck sequences and multiplicative characters}
\label{sec:bjorck}

%%%%%%%%%%%%%%%%%%%%%%%%%%%%%%%%%%%%%%%%%%%%%%%%%%%%%%%%%%%%%
%%\subsection{}
%%\label{sec:}

%%Adrien-Marie Legendre introduced the Legendre symbol in 1798 while
%%proving the law of quadratic reciprocity. It can be generalized to
%%the Jacobi symbol and Dirichlet characters, but we shall not use those
%%generalizations here.

%%\begin{defn}
%%\label{sec:legendre}

%%Reference HarWri1965, Chapter VI for the Legendre symbol.
%%See notation of RLB

For each prime number $p$, 
recall that the \emph{Legendre symbol} modulo~$p$ is the function
$\dsps \chi = \Leg{\cdot}{p}:\ZZ/p\ZZ\rightarrow\{+1,0,-1\}$
given by
$$
\chi[k] = 
\Leg{k}{p}=\begin{cases}
+1 & \text{if } k\equiv m^{2} \pmod{p}
\text{ for some } m\in{\ZZ/p\ZZ}^{\times}, 
\\
0 & \text{if } k\equiv 0 \pmod{p}, 
\\
-1 & \text{if } k\not\equiv m^{2} \pmod{p}
\text{ for all } m\in{\ZZ/p\ZZ}.
\end{cases}
$$
The preimage of $+1$ under the Legendre symbol function is the
set $\mathcal{Q}$ of nonzero \emph{quadratic residues} modulo $p$; and
the preimage of $-1$ under the Legendre symbol function is the set
$\mathcal{Q}^{C}$ of \emph{quadratic nonresidues} modulo $p$. 
Among the many properties of the Legendre symbol,
%as well as of the Kloosterman sums that
%arise in Subsection~\ref{ssec:kloo} and Gauss and Ramanujan sums,
%RLB TO JJB: I AM REMOVING THIS CLAUSE BECAUSE IT SUGGESTS
%THAT GAUSS AND RAMANUJAN SUMS ARE CHARACTERS, WHICH IS NOT TRUE.
we shall use the fact that it is a
character of the multiplicative group $(\ZZ/p\ZZ)^{\times}$.
This means that $\chi$, when restricted to $(\ZZ/p\ZZ)^{\times}$,
is a group homomorphism into $\CC^{\times}$;
see \cite{HarWri1959}, Chapters~V and~VI.
%%Definition of arccosine, Legendre symbol, add comments and properties
%%of the Legendre symbol 
%%Further, if 
%%$\sqrt{\cdot}$ indicates the positive real square root and $\ln$ is
%%the real natural logarithm, then the \emph{arccosine} of $z\in\CC$ is 
%%$$
%% \arccos z=\frac{1}{2}\pi+i\ln\left(iz+\sqrt{1-z^{2}}\right).
%%$$

\begin{defn}
\label{def:bjorck}
The \emph{Bj\"{o}rck sequence} of length $p$, where $p$ is a prime and
$p\equiv 1 \pmod{4}$, is defined by
$$
u[k]=
% e^{i\theta\left(\frac{k}{p}\right)}
 \exp\left( i\theta \chi(k) \right)
=\exp\left( i\theta \left( \frac{k}{p} \right) \right),
\qquad \text{ where }\theta=\arccos\left(\frac{1}{1+\sqrt{p}}\right),
$$
for all $k\in\ZZ/p\ZZ$.

The \emph{Bj\"{o}rck sequence} of length $p$, where $p$ is a prime and
$p\equiv 3 \pmod{4}$, is defined by
$$
u_{p}[k]=\begin{cases}
% e^{i\phi}
\exp(i\phi) &  \text{if }k\in\mathcal{Q}^{C}\subseteq(\ZZ/p\ZZ)^{\times},
\text{ where }\phi=\arccos\left(\dfrac{1-p}{1+p}\right),
\\
 1 & \text{otherwise},\end{cases}
$$
for all $k\in\ZZ/p\ZZ$.
\end{defn}

In the case $p\equiv 1 \pmod{4}$, Definition~\ref{def:bjorck} is equivalent
to the following definition for the Legendre symbol sequence
$\{0,1,...,-1,...,1\}$ of length $p$.
We replace the first term $0$ by $1$, every term $1$ by
$$
  \eta = \exp\left(i \arccos\frac{\sqrt{p}-1}{p-1}\right)
 = \frac{1}{\sqrt{p}+1} + i \frac{\sqrt{p + 2\sqrt{p}}}{\sqrt{p}+1},
$$
and every term $-1$ by the complex conjugate of $\eta$;
see~\cite{saff2000} for a modest generalization. As proven by
Bj\"{o}rck and differently in \cite{BenWoo2011}, we obtain a
\emph{CAZAC}, and hence bi-unimodular, sequence with three values,
viz., $1$ at $k=0$, and $\eta$ and $\overline\eta$ at
$k\in(\ZZ/p\ZZ)^{\times}$.

In the case $p\equiv 3 \pmod{4}$, Definition~\ref{def:bjorck}   
is equivalent to the following definition for the Legendre
symbol sequence $\{0,1,...,-1\}$ of length $p$. Replace the first
term $0$ by $1$, and replace every $-1$ by
$$
\xi = \exp\left(i \arccos\frac{1-p}{1+p}\right)=
\frac{1-p}{1+p}+i\frac{2\sqrt{p}}{1+p}.
$$
As proven by Bj\"{o}rck and differently in \cite{BenWoo2011}, we obtain a
\emph{CAZAC},
and hence bi-unimodular, sequence with only two values, viz., $1$ and $\xi$.

\section{The main theorem}
\label{sec:theorem}

\subsection{The Legendre symbol and Kloosterman sums}
\label{ssec:kloo}

\begin{defn}
\label{def:kloo}
Let $p$ be a prime.  For any integers $a,b$,
the quantity
$$
   K[a,b;p]
=\sum_{x\in \ZZ/p\ZZ^{\times}}
%e^{2\pi i (ax + bx^{-1})/p},
\exp\big(2\pi i (ax + bx^{-1})/p\big),
$$
where $x^{-1}$ denotes the multiplicative inverse of $x$
in the field $\ZZ/p\ZZ$, is a \emph{Kloosterman sum.}
\end{defn}

Kloosterman sums are always real-valued, as the following
Lemma states.

\begin{lem}
\label{lem:klooreal}
Let $p$ be a prime.  Then
$K[a,b;p]\in\RR$ for all integers $a,b\in\ZZ$.

\end{lem}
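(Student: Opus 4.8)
The plan is to prove that $K[a,b;p]$ is fixed by complex conjugation, which is equivalent to asserting that it is real. First I would conjugate the defining sum term by term; since the quantity $2\pi(ax+bx^{-1})/p$ is real for each $x$, we have $\overline{\exp(2\pi i(ax+bx^{-1})/p)}=\exp(-2\pi i(ax+bx^{-1})/p)$, and hence
$$
\overline{K[a,b;p]}=\sum_{x\in(\ZZ/p\ZZ)^{\times}}\exp\big(-2\pi i(ax+bx^{-1})/p\big).
$$

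Next I would reindex this sum by the substitution $x\mapsto -x$. Because $p$ is prime, $\ZZ/p\ZZ$ is a field in which $-1\neq 0$, so $x\mapsto -x$ is a bijection of $(\ZZ/p\ZZ)^{\times}$ onto itself; moreover $(-x)^{-1}=-x^{-1}$. Under this substitution $ax+bx^{-1}$ is replaced by $-(ax+bx^{-1})$, so every exponent flips sign and the right-hand side above becomes $\sum_{x\in(\ZZ/p\ZZ)^{\times}}\exp\big(2\pi i(ax+bx^{-1})/p\big)=K[a,b;p]$. Therefore $\overline{K[a,b;p]}=K[a,b;p]$, which forces $K[a,b;p]\in\RR$.

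There is essentially no obstacle here: the only points needing (minimal) verification are that $x\mapsto -x$ really does permute the index set $(\ZZ/p\ZZ)^{\times}$ and that it commutes with inversion, both of which are immediate from $\ZZ/p\ZZ$ being a field. For later use one can record the parallel substitutions $x\mapsto x^{-1}$ and $x\mapsto cx$ (for $c\in(\ZZ/p\ZZ)^{\times}$), which yield $K[a,b;p]=K[b,a;p]$ and $K[a,b;p]=K[ca,c^{-1}b;p]$ respectively, but neither identity is needed for the reality statement.
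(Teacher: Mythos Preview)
Your proof is correct and is essentially identical to the paper's own proof: both conjugate termwise and then reindex via the substitution $x\mapsto -x$ to obtain $\overline{K[a,b;p]}=K[a,b;p]$. The extra identities you mention at the end are not used in the paper either.
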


\begin{proof}
By the substitution $y=-x$, we have
\[
\overline{K[a,b;p]}
=\sum_{x\in \ZZ/p\ZZ^{\times}}
e^{-2\pi i (ax + bx^{-1})/p}
=\sum_{y\in \ZZ/p\ZZ^{\times}}
e^{2\pi i (ay + by^{-1})/p}
=K[a,b;p].
\qedhere
\]
\end{proof}

The following classical description
of certain Kloosterman sums was first observed by
Hans Sali\'{e} in equation~(52) of \cite{Sal}, using
a formula of Ernst Jacobsthal from a footnote on page~239 of \cite{Jac}.
Jacobsthal's footnote refers the reader to his 1906 Ph.D.\ thesis,
but fortunately the proof of his formula is not 
difficult to derive.

\begin{lem}
\label{lem:jac}
Fix an odd prime $p$ and an integer $a$ not divisible by $p$.
Let $\dsps \chi = \Leg{\cdot}{p}$ denote the Legendre symbol modulo $p$.
\begin{list}{\rm \alph{bean}.}{\usecounter{bean}}
\item
\textup{(Jacobsthal, 1907)}
Let $F:\ZZ/p\ZZ \to \CC$ be any function.  Then
$$\sum_{x\in \ZZ/p\ZZ^{\times}} F[x + a x^{-1}]
= \sum_{x=0}^{p-1} F[x]
+ \sum_{x=0}^{p-1} \chi[x^2 - 4a] F[x].$$
\item
\textup{(Sali\'{e}, 1932)}
%$\dsps K[1,a;p] = \sum_{x=0}^{p-1} \chi[x^2 - 4a] \exp\big(2\pi i x/p\big)$.
$\dsps K[1,a;p] = \sum_{x=0}^{p-1} \chi[x^2 - 4a] e^{2\pi i x/p}$.
\end{list}
\end{lem}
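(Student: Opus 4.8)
The plan is to establish part (a) by a counting argument and then obtain part (b) by a direct specialization of the function $F$.

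\emph{Part (a).} Fix $a\in\ZZ$ with $p\nmid a$. For each $t\in\ZZ/p\ZZ$ I would let $N(t)$ denote the number of $x\in(\ZZ/p\ZZ)^{\times}$ satisfying $x + a x^{-1} = t$. Clearing denominators, this equation is equivalent to $x^2 - t x + a = 0$ in $\ZZ/p\ZZ$; since $a\not\equiv 0$, the element $x=0$ is never a solution, so $N(t)$ is simply the number of roots in $\ZZ/p\ZZ$ of the polynomial $X^2 - tX + a$. Inspecting its discriminant $t^2 - 4a$ shows that $N(t) = 2$ when $t^2 - 4a$ is a nonzero square, $N(t) = 0$ when it is a quadratic nonresidue, and $N(t) = 1$ when $t^2 - 4a \equiv 0 \pmod p$; with the convention $\chi[0]=0$, these three cases are captured uniformly by $N(t) = 1 + \chi[t^2 - 4a]$. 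Regrouping the terms of the left-hand sum according to the common value $t = x + a x^{-1}$ then gives
$$\sum_{x\in(\ZZ/p\ZZ)^{\times}} F[x + a x^{-1}] = \sum_{t=0}^{p-1} N(t)\,F[t] = \sum_{t=0}^{p-1} F[t] + \sum_{t=0}^{p-1} \chi[t^2 - 4a]\,F[t],$$
which is the asserted identity.

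\emph{Part (b).} Here I would apply part (a) to the function $F[y] = e^{2\pi i y/p}$. The left-hand side becomes $\sum_{x\in(\ZZ/p\ZZ)^{\times}} e^{2\pi i (x + a x^{-1})/p} = K[1,a;p]$ by Definition~\ref{def:kloo}, while the first term on the right, $\sum_{x=0}^{p-1} e^{2\pi i x/p}$, is the sum of all $p$-th roots of unity and hence vanishes. What remains is exactly $K[1,a;p] = \sum_{x=0}^{p-1} \chi[x^2 - 4a]\,e^{2\pi i x/p}$.

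There is no substantive obstacle in this argument; the one place to be careful is the root count for $N(t)$, specifically noting that the hypothesis $p\nmid a$ forces $x=0$ out of consideration, and that the degenerate case $t^2\equiv 4a$ yields a single (repeated) root — which is precisely why the convention $\chi[0]=0$ allows the single formula $N(t) = 1 + \chi[t^2-4a]$ to hold with no case distinction.
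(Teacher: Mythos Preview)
Your proof is correct and follows essentially the same approach as the paper: both reduce part~(a) to showing that the fibre count $N(t)=\#\{x\in(\ZZ/p\ZZ)^{\times}:x+ax^{-1}=t\}$ equals $1+\chi[t^2-4a]$, then regroup the sum, and both obtain part~(b) by specializing $F[x]=e^{2\pi i x/p}$. Your computation of $N(t)$ via the discriminant of $X^2-tX+a$ is in fact slightly more direct than the paper's, which instead analyzes the involution $x\mapsto ax^{-1}$ on fibres and verifies the set identity $\{g[x]:x^2\neq a\}=\{t:t^2-4a\in((\ZZ/p\ZZ)^{\times})^2\}$ by hand.
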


The formulas of Lemma~\ref{lem:jac} are known, but we include
their proofs because of the role they play in our approach.

\begin{proof}
(a). Let $g:(\ZZ/p\ZZ)^{\times} \to \ZZ/p\ZZ$ be the function
$g[x] = x + ax^{-1}$.  For each $t\in \ZZ/p\ZZ$, set
$$N[t] = {\rm card} \{x\in (\ZZ/p\ZZ)^{\times} : g[x] = t\}.$$
The desired sum can now be written as
$$\sum_{x\in \ZZ/p\ZZ^{\times}} F[x + a x^{-1}]
= \sum_{x\in \ZZ/p\ZZ^{\times}} F\big[g[x]\big]
= \sum_{t\in \ZZ/p\ZZ} N[t] F[t].$$
Thus, it suffices to show that $N[t] = 1 + \chi[t^2 -4a]$.

Note that $g[x] = g[ax^{-1}]$.
Conversely, for any $x,y\in (\ZZ/p\ZZ)^{\times}$
with $g[x]=g[y]$, we must have either $y=x$ or $y=ax^{-1}$,
since $0=g[x]-g[y] = (x-y)(1-ax^{-1}y^{-1})$.
Thus, $N[t]\leq 2$ for all $t\in \ZZ/p\ZZ$,
and $N[t]=1$ if and only if $t=g[x]$ for a point
$x\in (\ZZ/p\ZZ)^{\times}$ such that $x=ax^{-1}$.
This latter condition occurs if and only if 
$x^2=a$ in $\ZZ/p\ZZ$; in that case,
$t=g[x] = g[ax^{-1}] = 2x$, or equivalently, $t^2=4a$.
Thus, if we set
$$S = \{g[x] : x\in (\ZZ/p\ZZ)^{\times} , x^2 \neq a \},$$
then
$$N[t] = \begin{cases}
2 & \text{ if } t\in S,
\\
1 & \text{ if } t^2 = 4a,
\\
0 & \text{ otherwise}.
\end{cases}
$$
Note, on the other hand, that
$$1+\chi[t^2 - 4a] = \begin{cases}
2 & \text{ if } t^2 -4a \text{ is a square in } (\ZZ/p\ZZ)^{\times},
\\
1 & \text{ if } t^2 - 4a = 0 \text{ in } \ZZ/p\ZZ,
\\
0 & \text{ otherwise}.
\end{cases}
$$
Thus, it suffices to show that
\begin{equation}
\label{eq:Snew}
S = \{ t\in\ZZ/p\ZZ : t^2 - 4a
\text{ is a square in } (\ZZ/p\ZZ)^{\times} \}.
\end{equation}

Given $t\in S$, pick $x\in (\ZZ/p\ZZ)^{\times}$ such that $t=g[x]$.
Then
$$t^2 - 4a = (x+ax^{-1})^2 - 4a
= x^2 - 2a + a^2 x^{-2} = (x-ax^{-1})^2.$$
In addition, since $t^2\neq 4a$ for all $t\in S$, it follows
that $t^2 - 4a$ is a square in $(\ZZ/p\ZZ)^{\times}$,
proving the forward inclusion.

Conversely, given $t\in (\ZZ/p\ZZ)$ for which there is
some $z\in (\ZZ/p\ZZ)^{\times}$ with
$z^2 = t^2 - 4a$, set $x=(t+z)/2\in (\ZZ/p\ZZ)^{\times}$.
Then $x(x-z) = (t^2 - z^2)/4 = a$, and therefore
$g(x) = x + ax^{-1} = 2x - z = t$.  It follows that $t\in S$,
proving equation~(\ref{eq:Snew}) and hence part~(a).

Part~(b) is immediate by setting
$F[x] = e^{2\pi i x/p}$ and noting that
$\sum_{x=0}^{p-1} e^{2\pi i x/p} = 0$.
\end{proof}

\begin{thm}
\label{thm:charbound}
Fix an odd prime $p$.
%and integers $m,n\in\ZZ$ relatively prime to $p$.
Let $\dsps \chi = \Leg{\cdot}{p}$ be the Legendre symbol modulo $p$.
Then
$$e^{-\pi i mn/p} A_p(\chi)[m,n]\in \RR,
\qquad\text{and}\qquad
|A_p(\chi)[m,n]| \leq \frac{2}{\sqrt{p}},$$
%$$  \Big|\frac{1}{p}\sum_{k\in\ZZ/p\ZZ}
%  \chi[k+m]\overline{\chi[k]} e^{-2\pi i kn/p} \Big|
%  \leq 2\sqrt{p} $$
for all $m,n \in \ZZ/p\ZZ \smallsetminus \{0\}$.
\end{thm}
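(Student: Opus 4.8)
The plan is to express $A_p(\chi)[m,n]$ directly as a character sum and then apply Lemma~\ref{lem:jac} to reduce it to a Kloosterman sum, whose magnitude is controlled by Weil's bound. First I would write out
\[
p \cdot A_p(\chi)[m,n] = \sum_{k=0}^{p-1} \chi[m+k]\,\overline{\chi[k]}\, e^{-2\pi i kn/p}
= \sum_{k=0}^{p-1} \chi[k(k+m)]\, e^{-2\pi i kn/p},
\]
using that $\chi$ is real-valued and completely multiplicative (so $\overline{\chi[k]} = \chi[k] = \chi[k]^{-1}$ on $(\ZZ/p\ZZ)^\times$, and the $k=0$ term vanishes since $\chi[0]=0$). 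Completing the square inside the Legendre symbol, $k(k+m) = (k+m/2)^2 - (m/2)^2$, and substituting $x = k + m/2$ gives a sum of the shape $\sum_x \chi[x^2 - a]\, e^{2\pi i bx/p}$ for suitable $a = (m/2)^2 \neq 0$ and $b$ depending on $n$ (with an outgoing unimodular factor $e^{\pi i mn/p}$ pulled out).

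The next step is to recognize this as essentially the Salié-type sum of Lemma~\ref{lem:jac}(b). That lemma, in the form $K[1,a;p] = \sum_{x=0}^{p-1}\chi[x^2-4a]\,e^{2\pi i x/p}$, handles the case $b=1$; for general $b \neq 0$ I would rescale the summation variable $x \mapsto b^{-1}x$ (legitimate since $p \nmid b$ when $n \neq 0$), which turns the sum into $\chi[b^{-2}]\cdot(\text{something})$ — or more cleanly, I would re-run the Jacobsthal identity of Lemma~\ref{lem:jac}(a) with $F[x] = e^{2\pi i \beta x/p}$ for the appropriate $\beta$, getting $\sum_x \chi[x^2 - 4a]e^{2\pi i \beta x/p} = \sum_{x \in (\ZZ/p\ZZ)^\times} e^{2\pi i \beta(x + ax^{-1})/p} = K[\beta, a\beta; p]$. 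Either way, $p\cdot A_p(\chi)[m,n]$ becomes (up to a unimodular scalar and possibly a $\chi$-sign) a Kloosterman sum $K[a',b';p]$ with $p \nmid a'$ and $p \nmid b'$. The reality statement $e^{-\pi i mn/p}A_p(\chi)[m,n] \in \RR$ then falls out of Lemma~\ref{lem:klooreal}, since Kloosterman sums are real, together with the bookkeeping of the unimodular factors — I would need to check that the leftover phase is exactly $e^{\pi i mn/p}$, which is a matter of carefully tracking the completed-square shift and the $e^{-2\pi i kn/p}$ twist.

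For the bound, I would invoke Weil's estimate $|K[a,b;p]| \leq 2\sqrt{p}$ whenever $p \nmid ab$ — this is the place where the ``full power of Weil's exponential sum bound'' enters, as flagged in Remark~\ref{rem:weilproof}. Dividing by $p$ immediately yields $|A_p(\chi)[m,n]| \leq 2/\sqrt{p}$ for $m,n \neq 0$. The main obstacle I anticipate is not any deep step but the bookkeeping: making sure the variable changes ($k \mapsto k + m/2$, then the rescaling by $b^{-1}$ or $\beta$) are valid over $\ZZ/p\ZZ$ (here one uses that $p$ is odd so $2$ is invertible, and that $m,n \neq 0$ so the relevant quantities are units), and tracking the accumulated $\chi$-values and unimodular phases so that the reality claim comes out with precisely the factor $e^{-\pi i mn/p}$ rather than some other eighth root of unity. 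A secondary subtlety is that the raw sum over $k$ runs over all of $\ZZ/p\ZZ$ while the Kloosterman sum runs over $(\ZZ/p\ZZ)^\times$; the $k=0$ (equivalently $x = m/2$) term must be shown to be absorbed correctly, which is exactly what Lemma~\ref{lem:jac}(a) is designed to do via the identity $N[t] = 1 + \chi[t^2 - 4a]$ — so in fact routing everything through part~(a) rather than part~(b) is probably the cleanest path and avoids a separate argument for the excluded term.
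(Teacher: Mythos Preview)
Your proposal is correct and follows essentially the same route as the paper's proof: rewrite the ambiguity sum using the multiplicativity of $\chi$, complete the square, identify the resulting character sum as a Kloosterman sum via Lemma~\ref{lem:jac}(b), and then invoke Lemma~\ref{lem:klooreal} and Weil's bound. The paper packages your shift-then-rescale into a single substitution $k = cx - b$ with $c = -1/n$ and $b = m/2$, landing directly on $K[1,(mn)^2/16;p]$; the phase subtlety you flag is resolved exactly as you anticipate, via $(e^{2\pi i bn/p})^2 = (e^{\pi i mn/p})^2$, so the leftover factor is $\pm e^{\pi i mn/p}$ and the reality claim follows.
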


\begin{proof}
Fix $m,n \in \ZZ/p\ZZ \smallsetminus \{ 0\}$.
Noting that $\chi$ is multiplicative and real-valued, we have
$$
A_p(\chi)[m,n] = \frac{1}{p}\sum_{k\in\ZZ/p\ZZ}
\chi[k+m]\overline{\chi[k]} e^{-2\pi i kn /p}
= \frac{1}{p} \sum_{k\in\ZZ/p\ZZ} \chi\big[k(k+m)\big] e^{-2\pi i kn /p}.
$$
Let $a=(mn)^2/16$, $b = m/2$, and $c=-1/n$,
where we are doing the arithmetic in $\ZZ/p\ZZ$.
Substituting $k=cx-b$, we have
\begin{align}
\label{eq:Abound}
A_p(\chi)[m,n] & =\frac{1}{p}
\sum_{x\in\ZZ/p\ZZ} \chi\big[ (cx-b)(cx+b) \big] \exp(-2\pi i n (cx-b)/p)
\notag \\
& = \frac{e^{2\pi i bn/p}}{p}
\sum_{x\in\ZZ/p\ZZ} \chi[c^2 x^2 - b^2] e^{2\pi i x/p}
= \frac{e^{2\pi i bn/p}}{p} K[1,a;p],
\end{align}
where the final equality is valid because $b^2 = 4ac^2$, and hence
$$\chi[c^2 x^2 - b^2] = \chi\big[c^2(x^2 - 4a)\big]
= \chi[c^2] \chi[x^2 - 4a] = \chi[x^2 -4a].$$
Since $(e^{2\pi i bn/p})^2 = e^{2\pi i mn/p} = (e^{\pi i mn/p})^2$,
we have
$e^{2\pi i bn/p} = \pm e^{\pi i mn/p}$,
and therefore by equation~\eqref{eq:Abound}
and Lemma~\ref{lem:klooreal},
$$
e^{-\pi i mn/p} A_p(\chi)[m,n] =\pm \frac{1}{p} K[1,a;p] \in\RR.
$$
Finally, because $a\in\ZZ/p\ZZ\smallsetminus\{0\}$,
we have $|K[1,a;p]|\leq 2\sqrt{p}$,
by Weil's bound for Kloosterman sums in \cite{weil}.
Thus, equation~\eqref{eq:Abound} gives
$|A_p(\chi)[m,n]| \leq 2/\sqrt{p},$ as desired.
\end{proof}

\begin{rem}
\label{rem:weilkloo}
In \cite{weil}, Weil proves his bound for $|K[a,b;p]|$
by first using Lemma~\ref{lem:jac} to rewrite $K[a,b;p]$
as $\sum \chi[x^2 - 4a] e^{2\pi i x/p}$ and then bounding
the new sum.  Philosophically, then, it would be more
direct not to convert the sum to the form $\sum \exp (2\pi i (x + ax^{-1})/p)$.
Nevertheless, we have applied the transformation
in Lemma~\ref{lem:jac} because the latter
form of Kloosterman sums is better known than are the details of
Weil's proof.
\end{rem}

\subsection{Main bound}
\label{ssec:Mbound}

We shall need the following technical lemma, which gives bounds
for the ambiguity function of any sequence that is a function
of the Legendre symbol.

\begin{lem}
\label{lem:Ubound}
Fix an odd prime $p$
%integers $m,n\in\ZZ$ not divisible by $p$,
and complex numbers $r,s,t\in\CC$.
Let $\chi:\ZZ/p\ZZ\to \CC$ be the Legendre symbol modulo $p$,
and let $U:\ZZ/p\ZZ\to \CC$ be the function
$$U[k] = \begin{cases}
r & \dsps \text{ if } \chi(k)=1,
\\
s & \dsps \text{ if } \chi(k)=-1,
\\
t & \text{ if } k=0.
\end{cases}$$
%Then, for all $m,n \in \ZZ/p\ZZ \smallsetminus \{0\}$,
Set $R=(r+s)/2$, $S=(r-s)/2$,
$T=t-R$, and $\zeta_p = e^{2\pi i /p}$.
Then
%$$     \Big|\frac{1}{p}
%     \sum_{k\in\ZZ/p\ZZ} U[k+m]\overline{U[k]} e^{-2\pi i n k/p}\Big|
%     \leq \frac{2|S|^2}{\sqrt{p}} + \frac{|E|}{p}, $$
$$A_p(U)[m,n] = |S|^2 A_p(\chi)[m,n] +
\frac{1}{p}\big(E_1[m,n] + E_2[m,n]\big)$$
for all $m,n \in \ZZ/p\ZZ \smallsetminus \{0\}$,
where 
$E_1[m,n] = R\bar{T} + \bar{R}T \zeta_p^{mn}$,
and
$$E_2[m,n] = 
\begin{cases}
%R\bar{T} + \bar{R}T \zeta_p^{mn} +
(S\bar{T} + \bar{S} T \zeta_p^{mn})\chi[m]
+ (R\bar{S} + \bar{R} S \zeta_p^{mn})\chi[n]\sqrt{p}
& \text{ if } p\equiv 1 \pmod{4},
\\
%R\bar{T} + \bar{R}T \zeta_p^{mn} +
(S\bar{T} - \bar{S} T \zeta_p^{mn})\chi[m]
- (R\bar{S} + \bar{R} S \zeta_p^{mn})i\chi[n]\sqrt{p}
& \text{ if } p\equiv 3 \pmod{4}.
\end{cases}
%\begin{cases}
%(\bar{S} t \zeta + S\bar{t} - R \bar{S} \zeta - \bar{R}S)\chi[m]
%&
%\\
%\qquad
%+ (\bar{R} S\zeta  + R\bar{S})\chi[n] \sqrt{p}
%+ \bar{R}t\zeta + R\bar{t} - |R|^2 (1+\zeta)
%& \text{ if } p\equiv 1 \pmod{4},
%\\
%(R \bar{S} \zeta - \bar{R}S - \bar{S} t \zeta + S\bar{t})\chi[m]
%&
%\\
%\qquad
%- (\bar{R} S\zeta  + R\bar{S})i\chi[n] \sqrt{p}
%+ \bar{R}t\zeta + R\bar{t} - |R|^2 (1+\zeta)
%& \text{ if } p\equiv 3 \pmod{4}.
%\end{cases}
$$
\end{lem}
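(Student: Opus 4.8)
The plan is to write $U$ as a short linear combination of three basic functions on $\ZZ/p\ZZ$ and then to use bilinearity of the product inside $A_p$. Since $\chi[0]=0$, a direct check at $k=0$ and at $k\ne 0$ gives
$$U[k] = R + S\,\chi[k] + T\,\delta_0[k] \qquad (k\in\ZZ/p\ZZ),$$
where $\delta_0$ is the indicator function of $\{0\}$: the value at $k=0$ is $R+T=t$, while $R\pm S\in\{r,s\}$ handles $k\ne 0$. Substituting this expression for both $U[m+k]$ and $\overline{U[k]}$ in the definition of $A_p(U)[m,n]$ and expanding produces nine sums, one for each product $c\overline{c'}$ with $c,c'\in\{R,S,T\}$; I would organize the bookkeeping by that coefficient.

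Next I would evaluate the nine sums using three elementary ingredients. First, orthogonality: $\sum_{k}\zeta_p^{-kn}=0$ because $n\not\equiv 0$, which annihilates the $R\overline{R}$ term; and, since $m\not\equiv 0$, the $T\overline{T}$ term vanishes as well (it would require $k\equiv 0$ and $k\equiv -m$ simultaneously). Second, the four ``mixed'' terms carrying exactly one factor of $\delta_0$ collapse to a single value of the summand: over $p$, they contribute $R\overline{T}$, $\overline{R}T\zeta_p^{mn}$, $S\overline{T}\chi[m]$, and $\overline{S}T\chi[-m]\zeta_p^{mn}$. Third, the quadratic Gauss sum identity $\sum_{k}\chi[k]\zeta_p^{ck}=\chi[c]\,g$ for $c\not\equiv 0$, with $g=\sum_k\chi[k]\zeta_p^k$, applied to the $R\overline{S}$ term directly and to the $S\overline{R}$ term after the reindexing $k\mapsto k-m$, gives the contributions $R\overline{S}\,\chi[-n]\,g$ and $\overline{R}S\,\zeta_p^{mn}\chi[-n]\,g$, again over $p$. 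The one remaining term, with coefficient $S\overline{S}=|S|^2$, is by definition $|S|^2 A_p(\chi)[m,n]$, the leading term of the asserted identity.

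Finally I would reassemble. The $R\overline{T}$ and $\overline{R}T\zeta_p^{mn}$ contributions are exactly $E_1[m,n]/p$. The other four, $S\overline{T}\chi[m]$, $\overline{S}T\chi[-m]\zeta_p^{mn}$, $R\overline{S}\chi[-n]g$, and $\overline{R}S\zeta_p^{mn}\chi[-n]g$, become $E_2[m,n]/p$ once the two classical facts governing the $p\bmod 4$ split are inserted: the sign $\chi[-1]=(-1)^{(p-1)/2}$ turns $\chi[-m]$ into $+\chi[m]$ when $p\equiv1$ and into $-\chi[m]$ when $p\equiv3$ (this is the source of the sign change on the $\overline{S}T\zeta_p^{mn}\chi[m]$ term) and likewise converts $\chi[-n]$ into $\pm\chi[n]$; and Gauss's evaluation $g=\sqrt p$ for $p\equiv1\pmod4$ versus $g=i\sqrt p$ for $p\equiv3\pmod4$ supplies the factor $\sqrt p$ (resp.\ $-i\sqrt p$) multiplying $(R\overline{S}+\overline{R}S\zeta_p^{mn})\chi[n]$. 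Matching the results against the displayed formulas for $E_1$ and $E_2$ completes the proof. The argument is entirely mechanical, so the only real hazard is bookkeeping: one must track the nine terms and their conjugates accurately, carry out the reindexing $k\mapsto k-m$ in the $S\overline{R}$ term without sign errors, and thread the two independent $p\bmod 4$ dependencies --- the value $\chi[-1]$ and the phase of the Gauss sum $g$ --- correctly into the final form of $E_2$. It is also worth flagging that the hypotheses $m\ne 0$ and $n\ne 0$ are exactly what kill the $R\overline{R}$ and $T\overline{T}$ terms, so the identity genuinely needs $(m,n)$ to avoid $0$ in each coordinate.
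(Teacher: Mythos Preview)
Your proposal is correct and follows essentially the same approach as the paper's proof: both decompose $U$ as $R\cdot 1 + S\chi + T\delta_0$, expand $A_p(U)$ bilinearly into nine terms, and evaluate each using orthogonality of characters, the collapsing effect of $\delta_0$, and the classical Gauss sum evaluation together with the sign $\chi[-1]=(-1)^{(p-1)/2}$. The only cosmetic difference is that the paper introduces the notation $B_p(F,G)$ for the sesquilinear form and names the constant function $\eta$, but the nine computations and the final assembly into $E_1$ and $E_2$ are identical to what you describe.
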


\begin{proof}
%Fix $m,n \in \ZZ/p\ZZ \smallsetminus \{0\}$.
For any two functions $F,G:\ZZ/p\ZZ \to \CC$, write
$$
   B_p(F,G)[m,n] = \frac{1}{p}
   \sum_{k\in\ZZ/p\ZZ} F[k+m] \overline{G[k]} e^{-2\pi i kn /p}.
$$
Define functions $\eta,\delta:\ZZ/p\ZZ \to \CC$ by
$$
%\chi[k] = \Leg{k}{p}, \qquad
\eta[k] = 1
\qquad
\text{and}
\qquad
\delta[k] = \begin{cases}
0 & \text{ if } k\neq 0,
\\
1 & \text{ if } k = 0.
\end{cases}
$$
Thus,
$ U = R\eta + S \chi + T \delta$,
and hence
\begin{multline*}
B_p(U,U) =
|R|^2 B_p(\eta,\eta)
+ |S|^2 B_p(\chi,\chi)
+ |T|^2 B_p(\delta,\delta)
\\
{} 
+ R\bar{T} B_p(\eta,\delta)
+ \bar{R}T B_p(\delta,\eta)
+ S\bar{T} B_p(\chi,\delta)
+ \bar{S}T B_p(\delta,\chi)
+ R\bar{S} B_p(\eta,\chi)
+ \bar{R}S B_p(\chi,\eta).
\end{multline*}

To compute $B_p(U,U)$, we shall compute
each of these nine terms separately.
Since $m\neq 0$, we have $B_p(\delta,\delta)=0$.
In addition,
$B_p(\eta,\eta)=0$,
since $\sum_{k\in\ZZ/p\ZZ} e^{-2\pi i kn/p}=0$
and $n\neq 0$.
We also have $B_p(\chi,\chi)=A_p(\chi)$ by definition.
Meanwhile, it is immediate that
\begin{align*}
pB_p(\eta,\delta)[m,n] & = 1,
&
pB_p(\delta,\eta)[m,n] & = \zeta_p^{mn},
\\
pB_p(\chi,\delta)[m,n] & = \chi[m],  \quad \text{and}
&
pB_p(\delta,\chi)[m,n] & = \zeta_p^{mn}\chi[-m].
\end{align*}

Next, $pB_p(\eta,\chi)[m,n] = \tau[-n;p]$, where $\tau[a;p]$
is the Gauss sum
$$
\tau[a;p] = \sum_{k\in\ZZ/p\ZZ} \chi[k] e^{2\pi i ak /p}.
$$
However, Gauss proved that $\tau[a;p]=\varepsilon\chi[a]\sqrt{p}$,
where $\varepsilon=1$ if $p\equiv 1 \pmod{4}$,
and $\varepsilon=i$ if $p\equiv 3 \pmod{4}$;
see, for example, Proposition~6.3.1 and Theorem~6.4.1 of \cite{IR}.
Hence, $pB_p(\eta,\chi)[m,n] = \varepsilon\chi[-n]\sqrt{p}$.
Similarly,
\begin{align*}
pB_p(\chi,\eta)[m,n] &= \sum_{k\in \ZZ/p\ZZ} \chi[k+m] e^{-2\pi ikn/p}
= \sum_{j\in \ZZ/p\ZZ} \chi[j] e^{-2\pi i(j-m)n/p}
\\
& = \zeta_p^{mn} \tau[-n;p] = \varepsilon\zeta_p^{mn}\chi[-n]\sqrt{p}.
\end{align*}

Combining the nine computations above,
and noting that
$$\chi[-k]=\chi[-1]\chi[k]\begin{cases}
\chi[k] & \text{ if } p \equiv 1 \pmod{4},
\\
-\chi[k] & \text{ if } p \equiv 3 \pmod{4},
\end{cases}
$$
we have
$B_p(U,U) = |S|^2 A_p(\chi) + (E_1 + E_2)/p$,
where $E_1$ and $E_2$ are the quantities
in the statement of Lemma~\ref{lem:Ubound}.
%Finally, Theorem~\ref{thm:charbound} says that
%$|B_p(\chi,\chi)|\leq 2/\sqrt{p}$,
%and hence the result follows by the triangle inequality.
\end{proof}

The following elementary bound will be needed to prove
the $p\equiv 3 \pmod{4}$ case of Theorem~\ref{thm:Mbound}.

\begin{lem}
\label{lem:realbound}
Let $X,Y\in\RR$, and let $z\in\CC$ with $|z|=1$.  Then
$$|zX + (1-z^2)Y| \leq \sqrt{X^2 + 4Y^2}.$$
\end{lem}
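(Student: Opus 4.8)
The plan is to prove the inequality $|zX + (1-z^2)Y| \leq \sqrt{X^2 + 4Y^2}$ by writing $z = e^{i\alpha}$ and reducing both sides to explicit trigonometric expressions in $\alpha$. First I would factor out a unit-modulus quantity to symmetrize the expression. Observe that $zX + (1-z^2)Y = z\big(X + (z^{-1} - z)Y\big)$, and since $|z| = 1$ we have $|z| = 1$, so $|zX + (1-z^2)Y| = |X + (z^{-1}-z)Y|$. Now $z^{-1} - z = \bar z - z = -2i\,\mathrm{Im}(z) = -2i\sin\alpha$, which is purely imaginary. Therefore $X + (z^{-1}-z)Y = X - 2i(\sin\alpha)Y$ is a complex number with real part $X$ and imaginary part $-2(\sin\alpha)Y$.

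Given this, the modulus squared is $X^2 + 4(\sin^2\alpha)Y^2 \leq X^2 + 4Y^2$, since $\sin^2\alpha \leq 1$. Taking square roots gives the claimed bound. This is the entire argument; the only genuine content is the algebraic observation that the coefficient $1 - z^2$, after pulling out the factor $z$, becomes $z^{-1} - z$, which lands on the imaginary axis when $|z|=1$, so that $X$ and $Y$ contribute to orthogonal components of the resulting complex number and no cross term appears.

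I do not anticipate a serious obstacle here: the statement is elementary and the proof is a two-line computation once the factorization $zX + (1-z^2)Y = z(X + (\bar z - z)Y)$ is spotted. The one point requiring a small amount of care is justifying that $\bar z = z^{-1}$, which follows immediately from $|z|^2 = z\bar z = 1$; and that $\bar z - z = -2i\,\mathrm{Im}(z)$ is purely imaginary, so that $|X + (\bar z - z)Y|^2 = X^2 + (2\,\mathrm{Im}(z))^2 Y^2$ with no interaction between the two terms. Everything else is bounding $(2\,\mathrm{Im}(z))^2 = 4\sin^2\alpha$ by $4$. An alternative packaging, avoiding the substitution $z = e^{i\alpha}$ entirely, is to work directly with $w := \bar z - z$, noting $w \in i\RR$ so $\bar w = -w$, hence $|X + wY|^2 = (X+wY)(X+\bar w Y) = X^2 + (w + \bar w)XY + |w|^2 Y^2 = X^2 + |w|^2 Y^2$, and then $|w| = |\bar z - z| \leq |\bar z| + |z| = 2$; I would likely present it this way to keep the write-up self-contained and free of trigonometric identities.
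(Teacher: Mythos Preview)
Your proof is correct and follows essentially the same approach as the paper: both compute the squared modulus, observe that the cross term in $XY$ vanishes, and bound the remaining $Y^2$ coefficient by $4$. The only cosmetic difference is that you first factor out $z$ to obtain $|X + (\bar z - z)Y|$ and then invoke the fact that $\bar z - z$ is purely imaginary, whereas the paper expands $\big(zX + (1-z^2)Y\big)\big(\bar z X + (1-\bar z^2)Y\big)$ directly and checks that $z(1-\bar z^2) + \bar z(1-z^2) = 0$; these are the same computation, and your packaging arguably makes the reason for the cancellation more transparent.
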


\begin{proof}
Noting that $z\overline{z}=1$, we have
\begin{align*}
\big| z X + (1-z^2) Y \big|
&=
\sqrt{\big(z X + (1-z^2) Y \big)
\big(\overline{z} X + (1-\overline{z}^2) Y \big)}
\\
&=
\sqrt{X^2 + \big(z(1-\overline{z}^2) + \overline{z}(1-z^2)\big)XY
+ (1-z^2)(1-\overline{z}^2) Y^2}
\\
&=
\sqrt{X^2 + |1-z^2|^2 Y^2}
\leq \sqrt{X^2 + 4 Y^2},
\end{align*}
since
$z(1-\overline{z}^2) + \overline{z}(1-z^2)
= z - \overline{z} + \overline{z} - z = 0$
and $|1-z^2|\leq 2$.
\end{proof}

We are now ready to state and prove our main result.

\begin{thm}
\label{thm:Mbound}
Let $p$ be an odd prime, and let $u_p$ be the Bj\"{o}rck
function for $p$.  Then the ambiguity function, $A_p(u_p)$,
defined on $\ZZ/p\ZZ\times\ZZ/p\ZZ$ as
$$
     A_p(u_p)[m,n] =
     \frac{1}{p} \sum_{k\in\ZZ/p\ZZ}
     u_p[k+m] \overline{u_p[k]} e^{-2\pi i kn/p},
$$
satisfies the estimate
$$
    |A(u_p)[m,n]| < \frac{2}{\sqrt{p}} + 
\begin{cases}
\dfrac{4}{p} & \text{ if } p\equiv 1 \pmod{4},
\\
\dfrac{4}{p^{3/2}} & \text{ if } p\equiv 3 \pmod{4},
\end{cases}
$$
for all $(m,n) \in (\ZZ/p\ZZ\times\ZZ/p\ZZ) \smallsetminus\{(0,0)\}$.
\end{thm}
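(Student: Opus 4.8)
The plan is to combine the explicit computation of Lemma~\ref{lem:Ubound} with the Weil-type estimate of Theorem~\ref{thm:charbound}. First I would dispose of the degenerate directions: if $m=0$ and $n\neq 0$, then since $|u_p[k]|=1$ the sum $A_p(u_p)[0,n]=\frac1p\sum_k e^{-2\pi ikn/p}$ vanishes; and if $n=0$ and $m\neq 0$, then $A_p(u_p)[m,0]=C(u_p)[m]=0$ because $u_p$ is \emph{CAZAC}. So assume henceforth that $m,n\in\ZZ/p\ZZ\smallsetminus\{0\}$. Put $z=e^{\pi imn/p}$, so that $z^2=\zeta_p^{mn}$; by Theorem~\ref{thm:charbound} the quantity $\alpha:=\overline{z}\,A_p(\chi)[m,n]$ is real, $A_p(\chi)[m,n]=z\alpha$, and $\alpha^2=|A_p(\chi)[m,n]|^2\le 4/p$ by Weil's bound. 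In both cases below I then feed the Bj\"orck sequence into Lemma~\ref{lem:Ubound}; note that since $m,n\neq 0$ we have $\chi[m],\chi[n]\in\{-1,+1\}$, so $\chi[m]+\chi[n]\in\{0,\pm 2\}$.

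For $p\equiv 1\pmod 4$ one takes $(r,s,t)=(\eta,\overline\eta,1)$ in Lemma~\ref{lem:Ubound}, so that $R=1/(\sqrt p+1)$ and $T=\sqrt p/(\sqrt p+1)$ are real, $S=i\sqrt{p+2\sqrt p}/(\sqrt p+1)$ is purely imaginary, and $|S|^2=(p+2\sqrt p)/(\sqrt p+1)^2<1$. A short computation gives $E_1[m,n]=RT(1+\zeta_p^{mn})$ and $E_2[m,n]=(1-\zeta_p^{mn})\,S\bigl(T\chi[m]-R\sqrt p\,\chi[n]\bigr)$, whence $|E_1|\le 2RT$ and $|E_2|\le 2|S|\bigl(|T|+R\sqrt p\bigr)$ using $|\chi[m]|,|\chi[n]|\le 1$. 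Substituting these estimates, the bound $|A_p(\chi)[m,n]|\le 2/\sqrt p$, and the elementary inequality $\sqrt{p+2\sqrt p}<\sqrt p+1$ into Lemma~\ref{lem:Ubound} reduces the claim $|A_p(u_p)[m,n]|<2/\sqrt p+4/p$ to the polynomial inequality $4\sqrt p+4>0$; it is important here to keep the factor $|S|^2$ exact rather than bounding it crudely by $1$, since that is what produces the correct constant $4/p$.

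The case $p\equiv 3\pmod 4$ is the main obstacle, since the naive triangle inequality produces an error of order $1/p$ rather than the required $p^{-3/2}$, so one must exploit cancellation between the character sum and the error term. Here one takes $(r,s,t)=(1,\xi,1)$, which yields $R=\tfrac1{1+p}+i\tfrac{\sqrt p}{1+p}$, $S=T=\tfrac{p}{1+p}-i\tfrac{\sqrt p}{1+p}$, $|S|^2=p/(1+p)$, and---crucially---$R\overline S=i\sqrt p/(1+p)$, which is purely imaginary. Using $T=S$ together with this last fact I would simplify the error in Lemma~\ref{lem:Ubound} to
$$E_1[m,n]+E_2[m,n]=\frac{1-\zeta_p^{mn}}{1+p}\bigl(i\sqrt p+p(\chi[m]+\chi[n])\bigr),$$
so that the error and $A_p(\chi)[m,n]=z\alpha$ both carry the factor $1-\zeta_p^{mn}=1-z^2$. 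If $\chi[m]+\chi[n]=0$, the triangle inequality together with $|S|^2=p/(1+p)$ already gives $|A_p(u_p)[m,n]|\le 2/\sqrt p$, which suffices. If $\chi[m]+\chi[n]=\pm 2$, then
$$A_p(u_p)[m,n]=z\bigl(|S|^2\alpha\bigr)+(1-z^2)\Bigl(\pm\frac{2}{1+p}\Bigr)+\frac{i(1-z^2)}{\sqrt p\,(1+p)},$$
and I would apply Lemma~\ref{lem:realbound} to the first two summands with $X=|S|^2\alpha$ and $Y=\pm 2/(1+p)$; since $\alpha^2\le 4/p$ and $|S|^4=p^2/(p+1)^2$ we get $X^2+4Y^2\le(4p+16)/(p+1)^2$, while the last summand contributes at most $2/(\sqrt p\,(1+p))$. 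This yields $|A_p(u_p)[m,n]|\le \frac{2\sqrt{p+4}}{p+1}+\frac{2}{\sqrt p\,(p+1)}$, and the required strict bound $<2/\sqrt p+4/p^{3/2}$ follows by squaring the equivalent inequality $\sqrt{p^2+4p}<p+2+2/p$, which holds because $(p+2+2/p)^2-(p^2+4p)=8+8/p+4/p^2>0$. The delicate point throughout is exactly this last subcase: the factor $\sqrt p$ accompanying $\chi[n]$ in $E_2$ is only tamed by letting it combine, via Lemma~\ref{lem:realbound}, with the character sum $A_p(\chi)[m,n]$ bounded through Weil's theorem.
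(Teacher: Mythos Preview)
Your argument is correct and follows essentially the same route as the paper: you invoke Lemma~\ref{lem:Ubound} with the Bj\"orck values, bound the main term via Theorem~\ref{thm:charbound}, and in the $p\equiv 3\pmod 4$ case exploit the cancellation between $|S|^2 A_p(\chi)$ and $E_2/p$ through Lemma~\ref{lem:realbound}, arriving at the identical intermediate bound $\frac{2\sqrt{p+4}}{p+1}+\frac{2}{\sqrt p(p+1)}$. The only cosmetic differences are that you split on whether $\chi[m]+\chi[n]$ vanishes (the paper simply uses $|\chi[m]+\chi[n]|\le 2$ uniformly) and that your final algebraic verification squares $\sqrt{p^2+4p}<p+2+2/p$ whereas the paper uses $\sqrt{p^2+4p}\le p+2$; neither affects the substance.
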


\begin{proof}
Fix $(m,n) \in (\ZZ/p\ZZ\times\ZZ/p\ZZ) \smallsetminus\{(0,0)\}$.
If $m=0$, then $n\neq 0$, and we have
$$A_p(u_p)[0,n] = \frac{1}{p}\sum_{k\in\ZZ/p\ZZ}
u_p[k] \overline{u_p[k]} e^{-2\pi ik n/p}
= \frac{1}{p}\sum_{k\in\ZZ/p\ZZ} e^{-2\pi ik n/p} = 0,$$
since $|u_p[k]|=1$ for all $k\in\ZZ/p\ZZ$.  On the other hand,
if $n=0$, then $m\neq 0$, and we have
$$A_p(u_p)[m,0] = \frac{1}{p}\sum_{k\in\ZZ/p\ZZ}
u_p[k+m] \overline{u_p[k]} = 0,$$
because $u_p$ has zero autocorrelation.
Thus, by the fact that $u_p$ is a \emph{CAZAC}, we may assume
for the remainder of the proof that $m,n\neq 0$.

If $p\equiv 1 \pmod{4}$, then in the notation of
Lemma~\ref{lem:Ubound}, we have
$r=(1+\sqrt{p})^{-1}(1 + i \sqrt{2\sqrt{p} + p})$,
$s=(1+\sqrt{p})^{-1}(1 - i \sqrt{2\sqrt{p} + p})$,
and $t=1$.  Thus,
$$R=\frac{r+s}{2} = \frac{1}{1+\sqrt{p}},
\quad
S=\frac{r-s}{2} = \frac{i\sqrt{2\sqrt{p} + p}}{1+\sqrt{p}}
\quad\text{and}\quad
T = t-R = \frac{\sqrt{p}}{1+\sqrt{p}}.$$
The quantities $E_1$ and $E_2$ in Lemma~\ref{lem:Ubound} are therefore
$$
E_1[m,n] = \frac{\sqrt{p}(1+\zeta_p^{mn})}{(1+\sqrt{p})^2}
$$
and
\begin{align*}
E_2[m,n] &=\frac{1}{(1+\sqrt{p})^2}\Big[
(1-\zeta_p^{mn})\sqrt{p} \sqrt{2\sqrt{p} + p} \cdot i \chi[m]
+ (\zeta_p^{mn}-1)\sqrt{p} \sqrt{2\sqrt{p} + p} \cdot i \chi[n]
\Big]
\\
& = \frac{\sqrt{p}}{(1+\sqrt{p})^2}\Big[
i(1-\zeta_p^{mn})\big(\chi[m]-\chi[n]\big)\sqrt{2\sqrt{p} + p}\Big].
\end{align*}
Noting that $|1+\zeta_p^{mn}|$, $|1-\zeta_p^{mn}|$,
and $|\chi[m]-\chi[n]|$ are each less than or equal to $2$
and that $\sqrt{2\sqrt{p} + p} < \sqrt{1 + 2\sqrt{p} + p} = 1 + \sqrt{p}$,
we obtain
$$|E_1[m,n] + E_2[m,n]| <
\frac{2\sqrt{p}}{(1+\sqrt{p})^2} + \frac{4\sqrt{p}}{1+\sqrt{p}}
< \frac{2\sqrt{p}}{(1+\sqrt{p})^2} + 4.$$
Hence, by Lemma~\ref{lem:Ubound}
and Theorem~\ref{thm:charbound}, we have
\begin{align*}
|A_p(u_p)[m,n]| &\leq 
\frac{2}{\sqrt{p}}|S|^2 
+ \frac{2}{\sqrt{p} (1+\sqrt{p})^2} + \frac{4}{p}
=\frac{2}{\sqrt{p} (1+\sqrt{p})^2}
\big( 2\sqrt{p} + p + 1 \big) + \frac{4}{p}
\\
& = \frac{2}{\sqrt{p} (1+\sqrt{p})^2} (1+\sqrt{p})^2 + \frac{4}{p}
 = \frac{2}{\sqrt{p}} + \frac{4}{p}.
\end{align*}

Similarly, if $p\equiv 3 \pmod{4}$, then
$r=1$, $s=(1+p)^{-1}(1-p + 2 i \sqrt{p})$, and $t=1$, and therefore
$$R=\frac{r+s}{2} = \frac{1}{1-i\sqrt{p}},
\quad
S=\frac{r-s}{2} = \frac{-i\sqrt{p}}{1-i\sqrt{p}},
\quad\text{and}\quad
T=t-R = \frac{-i\sqrt{p}}{1-i\sqrt{p}}.$$
Thus, the quantities $E_1$ and $E_2$ in Lemma~\ref{lem:Ubound} are
$$
E_1[m,n] = \frac{i\sqrt{p}(1-\zeta_p^{mn})}{p+1}
$$
and
$$
E_2[m,n] = \frac{1}{p+1} \Big[
(p - p\zeta_p^{mn}) \chi[m]
- (i\sqrt{p} - \zeta_p^{mn} i\sqrt{p} ) i \chi[n] \sqrt{p}  \Big]
= \frac{p(1-\zeta_p^{mn})}{p+1}\Big[\chi[m] + \chi[n] \Big].
$$
Since $|S|^2 = p/(p+1)$, we have
$$\Big| |S|^2 A_p(\chi)[m,n] + \frac{1}{p} E_2[m,n] \Big|
=\frac{1}{p+1}
\Big|pA_p(\chi)[m,n] + (1-\zeta_p^{mn})(\chi[m] + \chi[n])\Big|.$$
Setting $z=e^{\pi i mn/p}$, $X=e^{-\pi i mn/p}pA_p(\chi)[m,n]$,
and $Y=\chi[m] + \chi[n]$,
so that $X\in\RR$ with $|X|\leq 2\sqrt{p}$
by Theorem~\ref{thm:charbound},
$Y\in \RR$ with $|Y|\leq 2$, 
and $|z|=1$,
Lemma~\ref{lem:realbound} tells us that
$$\Big| |S|^2 A_p(\chi)[m,n] + \frac{1}{p} E_2[m,n] \Big|
\leq \frac{\sqrt{X^2 + 4Y^2}}{p+1}
\leq \frac{\sqrt{4p + 16}}{p+1} = \frac{2\sqrt{p+4}}{p+1}.$$
Hence, by Lemma~\ref{lem:Ubound} and the fact
that $|1-\zeta_p^{mn}|\leq 2$, we obtain
\begin{align*}
|A_p(u_p)[m,n]|
& \leq
\Big| |S|^2 A_p(\chi)[m,n] + \frac{1}{p} E_2[m,n] \Big|
+ \Big| \frac{1}{p} E_1[m,n]\Big|
\\
& \leq \frac{2\sqrt{p+4}}{p+1} + \frac{2}{\sqrt{p}(p+1)}
= \frac{2}{\sqrt{p}(p+1)} \big( \sqrt{p^2 + 4p} + 1 \big)
\\
& \leq \frac{2(p+3)}{\sqrt{p}(p+1)} 
= \frac{2}{\sqrt{p}} + \frac{4}{\sqrt{p}(p+1)}
\leq \frac{2}{\sqrt{p}} + \frac{4}{p^{3/2}} .
\qedhere
\end{align*}
\end{proof}

\begin{rem}
The bounds in Theorem~\ref{thm:Mbound} may be improved very slightly
but at the great expense of simplicity.  For example, if $p\equiv 1 \pmod{4}$,
then the bounds $|1-\zeta_p^{mn}|\leq 2$
and $|1+\zeta_p^{mn}|\leq 2$
could be improved, as obviously
these quantities cannot both be simultaneously close to $2$.
However, the resulting bound is far more complicated to write, and
the savings is only about $2p^{-3/2}$, as illustrated
by considering $\zeta_p^{mn}$ very close to $-1$.  Similarly, removing
the simplification $4\sqrt{p}/(1+\sqrt{p}) < 4$ would also only
save us about $4p^{-3/2}$.
\end{rem}

%%%%%%%%%%%%%%%%%%%%%%%%%%%%%%%%%%%%%%%%%%%%%%%%%%%%%%%%%
%%%%%%%%%%%%%%%%%%%%%%%%%%%%%%%%%%%%%%%%%%%%%%%%%%%%%%%

\section{Figures and table}
\label{sec:figures}

%%Notes
%% a. I sent the 3 figures that we shall use to RLB on 2011-05-07.
%% b. Make the display item below look pretty.
%% c. Insert and label the 3 figures in this section.

Natural algebraic and analytic calculations convinced us that
the proof of Theorem~\ref{thm:Mbound}
depended on substantial number theoretic results. In parallel,
Figure~\ref{fig:curve}
supported the truth of Theorem~\ref{thm:Mbound}
before we proved it.
%\begin{figure}
%\includegraphics[width=100mm]{ApBWBjboundingGraphic.png}
%\caption{$p$ and $\max\{|A_p(u_p)[m,n]|: m,n \neq 0 \}$}
%\label{fig:curve}
%\end{figure}
\begin{figure}
\includegraphics[width=100mm]{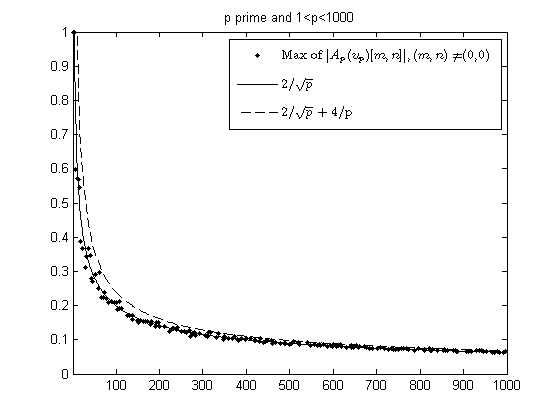}
\caption{$p$ and $\max\{|A_p(u_p)[m,n]|: (m,n) \neq (0,0) \}$}
\label{fig:curve}
\end{figure}
%% This will be Figure 1 of the curve, where the x-axis consists
%% of prime numbers.
The $x$-axis lists the primes between $1$ and
$1000$. The $y$ axis lists the values,
\begin{equation}
\label{eq:Amax}
     \max_{ (m,n) \neq (0,0)}|A_p(u_p)[m,n]|.
\end{equation}
Figure~\ref{fig:curve} also displays the curves $y=2/\sqrt{p}$
and $y=2/\sqrt{p} + 4/p$ for comparison.
%and the figure reflects the assertion of Theorem~\ref{thm:Mbound}.
Figure~\ref{fig:doppler},
for the case $p=13$, illustrates the symmetries inherent in the 
function $A_p(u_p)$ on $\ZZ/p\ZZ\times\ZZ/p\ZZ$.
\begin{figure}
\includegraphics[width=60mm]{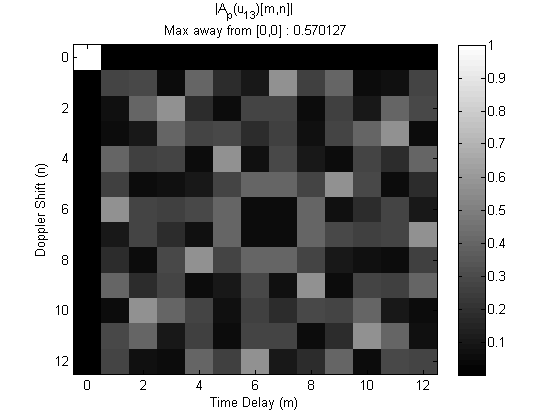}
\caption{$A_p(u_p)$ for $p=13$}
\label{fig:doppler}
\end{figure}
These are fully
explained for all $p$ in \cite{BenWoo2011};
and they led to the realization of the complexity involved in proving 
Theorem~\ref{thm:Mbound},
as well as to a host of geometrical and combinatorial phenomena and
problems. Figure~\ref{fig:3d}
%% This will be Figure 3.
illustrates Theorem~\ref{thm:Mbound}
for the case $p=503$.
\begin{figure}
\includegraphics[width=100mm]{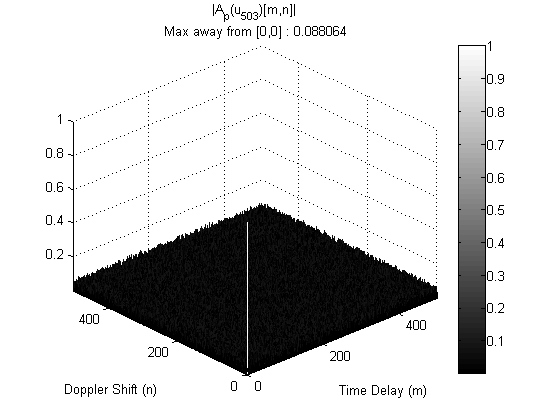}
\caption{$A_p(u_p)$ for $p=503$}
\label{fig:3d}
\end{figure}

Table~\ref{tab:pAp}
indicates some of the finer behavior
of the quantity~\eqref{eq:Amax},
over three different ranges of primes.
This data suggested to us that 
$2/{\sqrt p}$ was very nearly the upper bound for
$|A_p(u_p)[m,n]|$, $(m,n)\neq (0,0)$,
and it helped lead us to the proof that
$2/\sqrt{p} + 4/p$ is an upper bound.
In addition, although a number of primes $p\equiv 1 \pmod{4}$
require a bound larger than $2/\sqrt{p}$, we noted
that only very few primes $p\equiv 3 \pmod{4}$ allowed
$|A_p(u_p)[m,n]|>2/\sqrt{p}$ for $(m,n)\neq (0,0)$.
For example, $p=139$ is the only such prime in Table~\ref{tab:pAp}.
Our broader calculations for other primes
showed that the only such primes 
between $1000$ and $5000$ are $1259$, $2111$, and $3511$;
the only ones between
$10000$ and $24360$ are  $13879$, $16091$ and $23719$;
and there are none between $100000$ and $105000$.
Moreover, for all seven of those primes,
the maximum value of
$|A_p(u_p)[m,n]|-2/\sqrt{p}$ for $(m,n)\neq (0,0)$
is still far smaller than $4/p$, a fact which ultimately
led us to the sharper bound for  $p\equiv 3 \pmod{4}$
in Theorem~\ref{thm:Mbound}.

\begin{table}
\begin{tabular}{|c|c|c||c|c|c|}
\hline
$p$ & $\max |A_p(u_p)|$ & $2/\sqrt{p}$
&
$p$ & $\max |A_p(u_p)|$ & $2/\sqrt{p}$
\\
\hline
3  &  1  &                    1.15470
&
1009  &  0.065505 &   0.062963
\\ \hline
5     &  1                 &     0.894427
&
1013  &  0.064300  &  0.062838
\\ \hline
7     &  0.599074  &     0.755929
&
1019  &  0.060996  &  0.062653
\\ \hline
11    &  0.572765  &     0.603023
&
1021  &  0.063567  &  0.062592
\\ \hline
13    &  0.570127   &     0.554700
&
1031  &  0.061432  &    0.062287
\\ \hline
17    &  0.544798  &     0.485071
&
1033  &  0.062460  &  0.062227
\\ \hline
19    &  0.388357  &     0.458831
&
1039  &  0.061420  &  0.062047
\\ \hline
23    &  0.365960  &  0.417029
&
1049  &  0.063469  &  0.061751
\\ \hline
29    &  0.312280  &  0.371391
&
1051  &  0.060041  &  0.061692
\\ \hline
101   &  0.208395  &  0.199007
&
1061  &  0.063533  &  0.061401
\\ \hline
103   &  0.187876  &  0.197066
&
1063  &  0.060180  &  0.061343
\\ \hline
107   &  0.192309  &  0.193347
&
1069  &  0.062845  &  0.061170
\\ \hline
109   &  0.212120  &  0.191565
&
1087  &  0.059183  &  0.060662
\\ \hline
113   &  0.191960  &  0.188144
&
1091  &  0.059923  &  0.060550
\\ \hline
127   &  0.171881  &  0.177471
&
1093  &  0.060828  &  0.060495
\\ \hline
131   &  0.170530  &  0.174741
&
1097  &  0.063115  &  0.060385
\\ \hline
137   &  0.159752  &  0.170872
&
1103  &  0.059840  &  0.060220
\\ \hline
139   &  0.171326  &  0.169638
&
1109  &  0.061014  &  0.060057
\\ \hline
149   &  0.157303  &  0.163846
&
1117  &  0.062083  &  0.059842
\\ \hline
151   &  0.149263  &  0.162758
&
1123  &  0.058489  &  0.059682
\\ \hline
157   &  0.157840  &  0.159617
&
1129  &  0.062178  &  0.059523
\\ \hline
163   &  0.154913  &  0.156652
&
1151  &  0.058290  &  0.058951
\\ \hline
167   &  0.152243  &  0.154765
&
1153  &  0.061266  &  0.058900
\\ \hline
173   &  0.152966  &  0.152057
&
1163  &  0.058550  &  0.058646
\\ \hline
179   &  0.143966  &  0.149487
&
1171 &   0.056711  &  0.058446
\\ \hline
181   &  0.154193  &  0.148659
&
1181 &   0.059624  &  0.058198
\\ \hline
191   &  0.139244  &  0.144715
&
1187 &   0.057459  &  0.058050
\\ \hline
193   &  0.151468  &  0.143963
&
1193 &   0.059935  &  0.057904
\\ \hline
197   &  0.151479  &  0.142494
&
1201  &  0.057850  &  0.057711
\\ \hline
199   &  0.138516  &  0.141776
&
1213  &  0.058716  &  0.057425
\\ \hline
\end{tabular}
\caption{Comparison of $\max |A_p(u_p)|$
outside $(0,0)$ with $2/\sqrt{p}$}
\label{tab:pAp}
\end{table}

\textbf{Acknowledgements:}
The authors gratefully acknowledge the support of various grants. For
the first-named author, the grants are ONR Grant N00014-09-1-0144 and
MURI-ARO Grant W911NF-09-1-0383. For the second named author, the
grant is NSF Grant DMS-0901494. The third-named author was supported
by the Norbert Wiener Center as recipient of the Daniel Sweet
Undergraduate Research Fellowship. Further, at the time of the
first-named author's presentation of their results at SampTA2011 in
Singapore, Professor Bruno Torresani kindly pointed out two references
on which we comment in Section~\ref{sec:back}.
Finally, and although not represented explicitly in this paper, we have
benefitted from expert advice on hardware implementation by
Drs.~Michael Dellomo, Joseph Lawrence, and George Linde.

\bibliographystyle{amsplain}
\bibliography{BBW}

\providecommand{\bysame}{\leavevmode\hbox to3em{\hrulefill}\thinspace}
\providecommand{\MR}{\relax\ifhmode\unskip\space\fi MR }
% \MRhref is called by the amsart/book/proc definition of \MR.
\providecommand{\MRhref}[2]{%
  \href{http://www.ams.org/mathscinet-getitem?mr=#1}{#2}
}
\providecommand{\href}[2]{#2}
\begin{thebibliography}{10}

\bibitem{allt1980}
W.~O. Alltop, \emph{Complex sequences with low periodic correlations}, IEEE
  Trans. on Information Theory \textbf{26} (1980), 350--354.

\bibitem{AusBar1998}
Louis Auslander and Paulo~E. Barbano, \emph{Communication codes and {B}ernoulli
  transformations}, Appl. Comput. Harmon. Anal. \textbf{5} (1998), no.~2,
  109--128.

\bibitem{BelMon2002}
Mark~R. Bell and Stephane Monrocq, \emph{Diversity waveform signal proceesing
  for delay-{D}oppler measurement and imaging}, Digital Signal Processing
  \textbf{12} (2002), no.~2/3, 329--346.

\bibitem{BenWoo2011}
John~J. Benedetto, Robert~L. Benedetto, and Joseph~T. Woodworth,
  \emph{Bj\"{o}rck {CAZAC}s: theory, geometry, and waveform ambiguity
  behavior}, preprint, 2011.

\bibitem{BenDat2010}
John~J. Benedetto and Somantika Datta, \emph{Construction of infinite
  unimodular sequences with zero autocorrelation}, Adv Comput Math \textbf{32}
  (2010), 191--207.

\bibitem{BenDon2007}
John~J. Benedetto and Jeffrey~J. Donatelli, \emph{Ambiguity function and frame
  theoretic properties of periodic zero autocorrelation waveforms}, IEEE J.
  Special Topics Signal Processing \textbf{1} (2007), 6--20.

\bibitem{BenKonRan2009}
John~J. Benedetto, Ioannis Konstantinidis, and Muralidhar Rangaswamy,
  \emph{{Phase coded waveforms and their design - the role of the ambiguity
  function}}, IEEE Signal Processing Magazine \textbf{26} (2009), 22--31.

\bibitem{bjor1985}
G\"{o}ran Bj\"{o}rck, \emph{Functions of modulus one on {${\bf Z}_p$} whose
  {F}ourier transforms have constant modulus}, A.\ {H}aar memorial conference,
  {V}ol.\ {I}, {II} ({B}udapest, 1985), Colloq. Math. Soc. J\'anos Bolyai,
  vol.~49, North-Holland, Amsterdam, 1987, pp.~193--197.

\bibitem{bjor1990}
\bysame, \emph{Functions of modulus one on $\mathbb{Z}_n$ whose {F}ourier
  transforms have constant modulus, and cyclic $n$-roots}, Proc. of 1989 NATO
  Adv. Study Inst. on Recent Advances in Fourier Analysis and its Applications,
  1990, pp.~131--140.

\bibitem{chu1972}
D.~C. Chu, \emph{Polyphase codes with good periodic correlation properties},
  IEEE Transactions on Information Theory \textbf{18} (1972), 531--532.

\bibitem{FraZad1962}
R.~L. Frank and S.~A. Zadoff, \emph{Phase shift pulse codes with good periodic
  correlation properties}, IRE Trans. Inf. Theory, \textbf{8} (1962), no.~6,
  381--382.

\bibitem{GolGon2005}
Solomon~W. Golomb and Guang Gong, \emph{Signal {D}esign for {G}ood
  {C}orrelation}, Cambridge University Press, 2005.

\bibitem{GurHadSoc2008}
Shamgar Gurevich, Ronny Haddani, and Nir Sochen, \emph{The finite harmonic
  oscillator and its applications to sequences, communication, and radar}, IEEE
  Trans. on Information Theory \textbf{54} (2008), 4239--4253.

\bibitem{haag1996}
Uffe Haagerup, \emph{Orthogonal maximal abelian {$*$}-subalgebras of the
  {$n\times n$} matrices and cyclic {$n$}-roots}, Operator algebras and quantum
  field theory ({R}ome, 1996), Int. Press, Cambridge, MA, 1997, pp.~296--322.

\bibitem{HarWri1959}
Godfrey~H. Hardy and Edward~M. Wright, \emph{An {I}ntroduction to the {T}heory
  of {N}umbers}, fourth ed., Oxford at the Clarendon Press, Oxford, 1965.

\bibitem{HelKum1998}
Tor Helleseth and P.~Vijay Kumar, \emph{Sequences with low correlation},
  Handbook of {C}oding {T}heory, Vol. I, II (Vera~S. Pless and W.~Cary Huffman,
  eds.), North-Holland, Amsterdam, 1998, pp.~1765--1853.

\bibitem{HerStr2009}
Matthew~A. Herman and Thomas Strohmer, \emph{High-resolution radar via
  compressed sensing}, IEEE Trans. on Signal Processing \textbf{57} (2009),
  2275--2284.

\bibitem{IR}
Kenneth Ireland and Michael Rosen, \emph{A {C}lassical {I}ntroduction to
  {M}odern {N}umber {T}heory}, second ed., Graduate Texts in Mathematics,
  vol.~84, Springer-Verlag, New York, 1990.

\bibitem{Jac}
Ernst Jacobsthal, \emph{\"{U}ber die darstellung der primzahlen der form 4n + 1
  als summe zweier quadrate}, J. Reine Angew. Math. \textbf{132} (1907),
  238--245.

\bibitem{klau1960}
John~R. Klauder, \emph{The design of radar signals having both high range
  resolution and high velocity resolution}, Bell System Technical Journal
  \textbf{39} (1960), 809--820.

\bibitem{KlaPriDarAlb1960}
John~R. Klauder, A.~C. Price, Sidney Darlington, and Walter~J. Albersheim,
  \emph{The theory and design of chirp radars}, Bell System Technical Journal
  \textbf{39} (1960), 745--808.

\bibitem{LevMoz2004}
Nadav Levanon and Eli Mozeson, \emph{Radar {S}ignals}, Wiley Interscience, IEEE
  Press, 2004.

\bibitem{MauSar}
Christian Mauduit and Andr{\'a}s S{\'a}rk{\"o}zy, \emph{On finite pseudorandom
  binary sequences. {I}. {M}easure of pseudorandomness, the {L}egendre symbol},
  Acta Arith. \textbf{82} (1997), no.~4, 365--377.

\bibitem{mow1996}
Wai~Ho Mow, \emph{A new unified construction of perfect root-of-unity
  sequences}, Proc. IEEE 4th International Symposium on Spread Spectrum
  Techniques and Applications (Germany), September 1996, pp.~955--959.

\bibitem{popo1992}
Branislav~M. Popovic, \emph{Generalized chirp-like polyphase sequences with
  optimum correlation properties}, IEEE Transactions on Information Theory
  \textbf{38} (1992), no.~4, 1406--1409.

\bibitem{popo2010}
\bysame, \emph{Fourier duals of {B}j{\"o}rck sequences}, SETA, 2010,
  pp.~253--258.

\bibitem{HolRicSch2010}
Mark~A. Richards, James~A. Scheer, and William~A. Holm (eds.), \emph{Principles
  of {M}odern {R}adar}, SciTech Publishing, Inc., Raleigh, NC, 2010.

\bibitem{saff2000}
Bahman Saffari, \emph{Some polynomial extremal problems which emerged in the
  twentieth century}, Twentieth century harmonic analysis---a celebration ({I}l
  {C}iocco, 2000), NATO Sci. Ser. II Math. Phys. Chem., vol.~33, Kluwer Acad.
  Publ., Dordrecht, 2001, pp.~201--233.

\bibitem{saff2004}
\bysame, \emph{Oral and email communications}, 2004--2010.

\bibitem{Sal}
Hans Sali{\'e}, \emph{\"{U}ber die {K}loostermanschen {S}ummen {$S(u,v;q)$}},
  Math. Z. \textbf{34} (1932), no.~1, 91--109.

\bibitem{skol1980}
Merrill~I. Skolnik, \emph{Introduction to {R}adar {S}ystems}, McGraw-Hill Book
  Company, 1980.

\bibitem{HeaStr2003}
Thomas Strohmer and Robert W.~Heath Jr., \emph{Grassmannian frames with
  applications to coding and communications}, Appl. Comp. Harm. Anal.
  \textbf{14} (2003), 257--275.

\bibitem{tury1968}
Richard~J. Turyn, \emph{Sequences with small correlation}, Error {C}orrecting
  {C}odes ({P}roc. {S}ympos. {M}ath. {R}es. {C}enter, {M}adison, {W}is., 1968),
  John Wiley, New York, 1968, pp.~195--228.

\bibitem{vakm1969}
David~E. Vakman, \emph{Sophisticated {S}ignals and the {U}ncertainty
  {P}rinciple in {R}adar}, Springer-Verlag, New York, 1969.

\bibitem{weil}
Andr{\'e} Weil, \emph{On some exponential sums}, Proc. Nat. Acad. Sci. U. S. A.
  \textbf{34} (1948), 204--207.

\bibitem{weil1948b}
\bysame, \emph{Sur les courbes alg{\'e}briques et les vari{\'e}t{\'e}s qui s'en
  d{\'e}duisent}, Actualit{\'e}s Sci. et Ind. no. 1041, Hermann, Paris, 1948.

\bibitem{Wood1953a}
Philip~M. Woodward, \emph{{Theory of radar information}}, IEEE Transactions on
  Information Theory \textbf{1} (1953), no.~1, 108--113.

\end{thebibliography}

\end{document}